\documentclass{amsart}
\title{Indecomposable Objects of $\underline{Rep}(GL_t)$ in Terms of Exterior Powers of the Tautological Object and its Dual}
\author{Christopher Ryba}
\address{Department of Mathematics, Massachusetts Institute of Technology, Cambridge, MA 02139, USA}
\email{ryba@mit.edu}
\usepackage{hyperref}
\usepackage{fullpage}
\usepackage{amsmath}
\usepackage{amsfonts}
\usepackage{amssymb}
\usepackage{ytableau}
\usepackage{subfig}
\usepackage{graphicx, caption}
\begin{document}
\maketitle
\begin{abstract}
The purpose of this short note is to understand the Grothendieck ring of the Deligne category $\underline{\text{Rep}}(GL_t)$. We give a formula for the class of an indecomposable object in terms of exterior powers of the tautological object and its dual.
\end{abstract}
\newtheorem{theorem}{Theorem}[section]
\newtheorem{lemma}[theorem]{Lemma}
\newtheorem{proposition}[theorem]{Proposition}
\newtheorem{corollary}[theorem]{Corollary}

\section{Introduction}
\noindent
The Deligne category $\underline{\text{Rep}}(GL_t)$ ``interpolates'' representations of general linear groups $GL_n(\mathbb{C})$ in a precise way. It is a tensor category that admits ``specialisation'' functors to representation categories of general linear groups (of all sizes), which may be used to study the representations of $GL_n(\mathbb{C})$ for all $n$ simultaneously. We discuss the structure of the Grothendieck ring of $\underline{\text{Rep}}(GL_t)$. In particular, we explain how the class of an indecomposable object $X_{\lambda, \mu}$ can be expressed in terms of exterior powers of the tautological object $V$ of $\underline{\text{Rep}}(GL_t)$ as well as exterior powers of $V^*$, the dual of $V$. Our main formula (Equation 4) is:
\[
[X_{\lambda, \mu}] = S_{\lambda,\mu}(x,y) = \sum_{\tau}(-1)^{|\tau|}s_{\lambda / \tau}(x) s_{\mu / \tau^\prime}(y),
\]
which realises the class of the indecomposable object $X_{\lambda, \mu}$ as a particular symmetric function in $\Lambda \otimes \Lambda$ ($\Lambda$ is the ring of symmetric functions), which is identified with the Grothendieck ring of $\underline{\text{Rep}}(GL_t)$. In this setting, $e_i(x) = e_i \otimes 1 = [\bigwedge^i V]$, and $e_i(y) = 1 \otimes e_i = [\bigwedge^i V^*]$.
\newline \newline \noindent
In Section 2, we give a brief summary of the Deligne category $\underline{\text{Rep}}(GL_t)$, and some of its key properties that we will use. In Section 3, we discuss the Grothendieck ring of $\underline{\text{Rep}}(GL_t)$, and in particular that it is isomorphic to the tensor product of two copies of the ring of symmetric functions. In Section 4, we use a Jacbi-Trudi determinant to express the class of $X_{\lambda, \mu}$. In Section 5, we evalate the determinant, and finally, in Section 6, we express our results in a generating function.

\section{Background about $\underline{\text{Rep}}(GL_t)$}
\noindent
The category $\underline{\text{Rep}}(GL_t)$ is a symmetric tensor category depending ``polynomially'' on the parameter $t \in \mathbb{C}$ which ``interpolates'' the representation categories of the general linear groups $GL_n(\mathbb{C})$. It was first introduced in \cite{DM}, with further discussion in \cite{D1}. \cite{deligne}, and \cite{etingofRCR2}. We summarise the definition, focusing on the parts that are important for us. We direct the interested reader to the literature for further details.
\newline \newline \noindent
Let $V = \mathbb{C}^n$ be the tautological representation of $GL_n(\mathbb{C})$, and let $V^*$ be its dual. Then the morphism space 
\[
\hom_{GL_n(\mathbb{C})}(V^{\otimes r_1} \otimes (V^*)^{\otimes s_1}, V^{\otimes r_2} \otimes (V^*)^{\otimes s_2})
\]
can be described using walled Brauer diagrams (which we do not define here). Specifically, provided $n$ is large enough, there is a basis of this hom-space given by walled Brauer diagrams of a fixed shape (for $n$ small, it is a spanning set, but not linearly independent). These diagrams encode combinations of the canonical maps $V \to V$, $V^* \to V^*$, $V \otimes V^* \to \mathbb{C}$, and $\mathbb{C} \to V \otimes V^*$ (where $\mathbb{C}$ indicates the trivial representation).
\newline \newline \noindent
The composition of two morphisms indexed by diagrams $D_1, D_2$ is
\[
D_1 \circ D_2 = n^{c(D_1, D_2)} (D_1 \cdot D_2)
\]
where $D_1 \cdot D_2$ is the concatenation of the diagrams $D_1$ and $D_2$ and $c(D_1, D_2)$ is a combinatorial quantity obtained from the concatenation process. The key observation is that the number $n$ can be replaced with any number (even a formal variable), and the composition still makes sense. So the Deligne category $\underline{\text{Rep}}(GL_t)$ is defined as follows. Consider a category $\mathcal{C}$ whose objects are indexed by pairs of non-negative integers $(r,s)$, which are to be thought of as corresponding to $V^{\otimes r} \otimes (V^*)^{\otimes s}$. The set of morphisms between two such objects is the $\mathbb{C}$-span of walled Brauer diagrams of the appropriate shape, as in the $GL_n(\mathbb{C})$ case. The composition of morphisms is also as in the general linear group case, but now the scalars $n^{c(D_1, D_2)}$ in the composition rule are replaced by $t^{c(D_1, D_2)}$. The category $\mathcal{C}$ is a tensor category with the tensor product of objects given by $(r_1, s_1) \otimes (r_2, s_2) = (r_1+r_2, s_1+s_2)$, and tensor product of morphisms given by an appropriate juxtaposition of diagrams. In fact, $\mathcal{C}$ is rigid and symmetric (as a tensor category), but we do not go into the details here. Finally, $\underline{\text{Rep}}(GL_t)$ is the Karoubian envelope (additive envelope followed by idempotent completion) of $\mathcal{C}$, which can be thought of as allowing direct sums and direct summands of objects. Thus $\underline{\text{Rep}}(GL_t)$ inherits all the structure we have discussed above from $\mathcal{C}$.
\newline \newline \noindent
The indecomposable objects $X_{\lambda, \mu}$ of $\underline{\text{Rep}}(GL_t)$ are indexed by pairs of partitions $(\lambda, \mu)$. When $t=n$ is a positive integer, there is a symmetric tensor functor $F_n: \underline{\text{Rep}}(GL_n) \to GL_n(\mathbb{C})-\text{mod}$. If the lengths of $\lambda$ and $\mu$ are $r$ and $s$ respectively, the image of the indecomposable $X_{\lambda, \mu}$ is an irreducible representation with highest weight determined by $\lambda$ and $\mu$:
\[
F_n(X_{\lambda, \mu}) = V_{(\lambda_1, \ldots, \lambda_r, 0, \ldots, 0, -\mu_s, \ldots, -\mu_1)},
\]
provided that $n \geq r+s$, so that the above sequence defines a valid highest weight (otherwise the image under $F_t$ is the zero object). In particular, as $n$ increases, the highest weight defining $F_t(X_{\lambda, \mu})$ has more zeroes inserted into the region separating the positive and negative parts. It can be checked that tensor products in $\underline{\text{Rep}}(GL_t)$ can be computed by using $F_n$ to pass to $GL_n(\mathbb{C})$-mod for $n$ sufficiently large.

\section{The Grothendieck Ring}
\noindent
Let $R$ be the Grothendieck ring of $\underline{Rep}(GL_t)$. Also let $V$ be the tautological object of $\underline{Rep}(GL_t)$, and $V^*$ its dual. The following proposition is well known, but we explain it for completeness.

\begin{proposition}
We have that $R$ is freely generated (as a polynomial ring) by $[\bigwedge^i V]$ and $[\bigwedge^i V^*]$ (square brackets indicate the taking the class in the Grothendieck ring). This yields an identification $R = \Lambda \otimes \Lambda$ where $\Lambda$ is the ring of symmetric functions, $[\bigwedge^i V] = e_i \otimes 1$, and $[\bigwedge^i V^*] = 1 \otimes e_i$.
\end{proposition}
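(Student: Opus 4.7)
The plan is to prove the proposition by showing that the natural ring homomorphism
\[
\Phi \colon \Lambda \otimes \Lambda \longrightarrow R, \qquad e_i \otimes 1 \mapsto [\bigwedge^i V], \quad 1 \otimes e_i \mapsto [\bigwedge^i V^*],
\]
is an isomorphism. The map is well-defined as a ring homomorphism, since $R$ is commutative (because $\underline{\text{Rep}}(GL_t)$ is symmetric monoidal) and $\Lambda \otimes \Lambda$ is the polynomial ring freely generated by the $e_i \otimes 1$ and $1 \otimes e_i$. Thus the content of the proposition is exactly that $\Phi$ is a bijection of $\mathbb{Z}$-modules.

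To establish bijectivity I would compare natural $\mathbb{Z}$-bases on the two sides. On $\Lambda \otimes \Lambda$, take the Schur basis $\{s_\lambda(x) s_\mu(y)\}_{(\lambda,\mu)}$; on $R$, by Krull-Schmidt take the basis $\{[X_{\lambda,\mu}]\}_{(\lambda,\mu)}$ of classes of indecomposables. Via the Jacobi-Trudi identity expressing each $s_\lambda$ as a determinant in the $e_i$, the image $\Phi(s_\lambda(x) s_\mu(y))$ becomes a specific polynomial in $[\bigwedge^i V]$ and $[\bigwedge^i V^*]$; the task reduces to controlling the transition matrix between these images and the indecomposable basis.

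The key step is to show this transition matrix is unitriangular with respect to the partial order by $|\lambda|+|\mu|$, with $1$'s on the diagonal. By the last paragraph of Section~2, one can compute it by passing through $F_n$ for $n$ sufficiently large. Under $F_n$, the element $\Phi(s_\lambda(x) s_\mu(y))$ becomes $[V_\lambda \otimes V_\mu^*]$ in the representation ring of $GL_n(\mathbb{C})$, using Jacobi-Trudi for the Schur functors applied to $V_n$ and $V_n^*$. Classically, the Koike--Terada stable decomposition gives
\[
[V_\lambda \otimes V_\mu^*] = \sum_{\alpha, \beta, \tau} c^\lambda_{\alpha,\tau}\, c^\mu_{\beta,\tau}\, [V_{\alpha,\beta}]
\]
for $n$ large, where the $\tau = \emptyset$ term contributes $[V_{\lambda,\mu}]$ with coefficient $1$ and every other summand has $|\alpha|+|\beta| < |\lambda|+|\mu|$. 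Using the identification $F_n(X_{\alpha,\beta}) = V_{\alpha,\beta}$ recorded in the excerpt, this lifts to the analogous unitriangular expression for $\Phi(s_\lambda(x) s_\mu(y))$ in the basis $\{[X_{\alpha,\beta}]\}$, so $\Phi$ sends one $\mathbb{Z}$-basis bijectively to another.

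The main obstacle is the cross-referencing between $\underline{\text{Rep}}(GL_t)$ and $GL_n(\mathbb{C})$-mod: one must verify that $F_n$ is injective on the $\mathbb{Z}$-span of indecomposable classes $[X_{\alpha,\beta}]$ of bounded total degree $|\alpha|+|\beta|$ for $n$ sufficiently large, so that identities in the representation ring of $GL_n(\mathbb{C})$ pull back to $R$. This should follow because $F_n(X_{\alpha,\beta})$ is a nonzero irreducible with a distinct highest weight for each $(\alpha,\beta)$ once $n$ is large, but it is the technical point to nail down.
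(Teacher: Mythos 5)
Your argument is correct, and it shares the paper's core mechanism --- filtering by $|\lambda|+|\mu|$, computing through $F_n$ for $n$ large, and exploiting a triangularity with respect to that filtration --- but the key lemma and the logical structure are genuinely different. The paper never expands $\Phi(s_\lambda(x)s_\mu(y))$ at all: it only studies the operators of multiplication by the generators $[\bigwedge^i V]$ and $[\bigwedge^i V^*]$ on the basis $\{[X_{\lambda,\mu}]\}$, observes via the Pieri rule that these act (modulo lower filtration degree) as multiplication by $e_i \otimes 1$ and $1 \otimes e_i$ act on the monomial/Schur picture, and concludes that $\mathrm{gr}(R)$ is the free polynomial algebra on the generators, whence $R$ is too. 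That route needs only the Pieri rule. You instead prove unitriangularity of the full transition matrix between $\{\Phi(s_\lambda(x)s_\mu(y))\}$ and $\{[X_{\alpha,\beta}]\}$, which requires the stronger classical input of the Koike--Terada (Littlewood) stable decomposition of $V_\lambda \otimes V_\mu^*$; in exchange you get more, since the statement $\Phi(s_\lambda(x)s_\mu(y)) = [X_{\lambda,\mu}] + (\text{lower order})$ is itself a step toward the paper's main formula, and indeed the $\tau$-sum in Koike--Terada is exactly what Sections 4--6 recover by inverting this relation. Your flagged technical point is handled correctly: since any fixed class involves only finitely many $[X_{\alpha,\beta}]$, one may choose $n$ large enough that all relevant $F_n(X_{\alpha,\beta})$ are pairwise non-isomorphic nonzero irreducibles, so $F_n$ is injective on their span and identities pull back to $R$; you do need to make explicit (as both arguments implicitly do) that Krull--Schmidt for the Karoubian category gives $\{[X_{\lambda,\mu}]\}$ as a $\mathbb{Z}$-basis of the split Grothendieck group before the basis-to-basis comparison makes sense.
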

\begin{proof}
We sketch the proof. Put a filtration on $R$ by making $X_{\lambda, \mu}$ have degree $|\lambda|+|\mu|$. Consider the action on $R$ of multiplying by $[\bigwedge^i V]$ or $[\bigwedge^i V^*]$ (where $i \in \mathbb{Z}_{>0}$). By considering the specialisation functors $F_n$ for $n$ large, we can compute these operations inside $GL_n(\mathbb{C})$-mod for large $n$, where we may use the Littlewood-Richardson rule, and more specifically, the Pieri rule (note that the exterior powers of $V$ and $V^*$ are irreducibles corresponding to partitions with a single column). The Pieri rule says how to take the tensor product of an arbitrary irreducible with one whose diagram is a single column with $r$ boxes (one adds $r$ boxes to the diagram of the first irreducible, no two in the same row, so as to get a partition; all such obtained partitions arise with multiplicity one). The observation is that multiplying $[X_{\lambda, \mu}]$ by $[\bigwedge^i V^*]$ gives a sum of $[X_{\nu, \mu}]$ where $\nu$ is obtained from $\lambda$ using the Pieri rule, plus lower order terms (where some boxes were used up to make $\mu$ smaller). An analogous situation arises for $[\bigwedge^i V^*]$. Passing to the associated graded, we see that $\text{gr}(R)$ is the free polynomial algebra in our generators. Hence $R$ is generated by these generators, with no relations. That is to say that it is also freely generated as a polynomial algebra by the same generators. Recall that elementary symmetric functions freely generate the ring of symmetric functions. Matching $e_i \otimes 1$ and $1 \otimes e_i$ with$[\bigwedge^i V]$ and $[\bigwedge^i V^*]$ respectively, we obtain an isomorphism $R = \Lambda \otimes \Lambda$.
\end{proof}
\noindent
The purpose of this note is to explain how to express $[X_{\lambda, \mu}]$ in terms of $[\bigwedge^i V]$ and $[\bigwedge^i V^*]$. This amounts to identifying the symmetric function $S_{\lambda, \mu} \in \Lambda \otimes \Lambda$ which is equal to $[X_{\lambda, \mu}]$ (given by the formula stated in the introduction). We also give a generating function that encodes this information.
\newline \newline \noindent
We may write $R = \Lambda \otimes \Lambda$, which we choose to think of as the ring of symmetric functions in two sets of variables, $x$ and $y$. In this presentation, we write $e_i(x)$ for $[\bigwedge^i V]$ and $e_i(y)$ for $[\bigwedge^i V^*]$. We will show the following identity (of symmetric functions in four sets of variables):
\begin{equation}\label{genfn}
\sum_{\lambda, \mu} s_\lambda(\alpha) s_\mu(\beta) S_{\lambda, \mu}(x, y) = \left(\prod \frac{1}{1-x_i\alpha_j}\right) \left( \prod \frac{1}{1-y_i\beta_j}\right) \left( \prod (1 - \alpha_i \beta_j) \right),
\end{equation}
where $s_\nu$ indicates a Schur function, and $S_{\lambda, \mu}(x, y) = [X_{\lambda, \mu}]$. 
\newline \newline \noindent
Some work on related topics sets the variables $y$ to be the inverses of the variables $x$ (which is well motivated if one considers characters), but this specialisation is undesirable for our purposes.

\section{Jacobi-Trudi Formula}
\noindent
Our first step is to construct a Jacobi-Trudi style formula for $S_{\lambda, \mu}$.
\newline \newline \noindent
We need the following fact about representations of $GL_n(\mathbb{C})$; 
\begin{equation}\label{detshift}
\bigwedge\nolimits^{i}(V^*) = (\bigwedge\nolimits^i V)^* = (\bigwedge\nolimits^{n-i} V) \otimes \det(V)^*
\end{equation}
The latter equality is easy to see by calculating characters. Now consider the irreducible representation of $GL_n(\mathbb{C})$ ($n$ large) indexed by the signature $(\lambda_1, \ldots, \lambda_r, 0, \ldots, 0, - \mu_{s}, \ldots, - \mu_1)$ (so that it is associated to the pair of partitions $(\lambda, \mu)$, where $l(\lambda) = r$ and $l(\mu) = s$). We tensor this representation with $\det(V)^q$, where $q \geq \mu_1$, so that we obtain a polynomial representation with signature $\rho = (\lambda_1 +q, \ldots, \lambda_r+q, q, \ldots, q, q-\mu_s, \ldots, q-\mu_1)$. We now pass to characters; the Schur function $s_\rho$ may now be written using a Jacobi-Trudi determinant. Since we are interested in elementary symmetric functions (standing in for exterior powers), we use the following form of the Jacobi-Trudi identity:
\[
s_\rho = \det\left( e_{\rho_i^\prime -i +j}(x) \right)
\]
Note that this expression involves taking the dual partition of $\rho$. It is given by the following formula (where $\mu^\prime$ is padded with zeroes as appropriate).
\[
\rho^\prime = (n - \mu_{q}^\prime, \ldots, n - \mu_1^\prime, \lambda_1^\prime, \ldots, \lambda_{r}^\prime)
\]
Now we tensor withe $q$ copies of the dual of the determinant (undoing the earlier shift). Equation \ref{detshift} implies that we may do this by replacing each $e_{n - \mu_{m+1-i}^\prime-i+j}(x)$ with $e_{\mu_{m+1-i}^\prime+i-j}(y)$, and not changing the terms associated to the $\lambda_i^\prime$. While the usual Jacobi-Trudi determinant features indices that increase along rows, we note that rows that feature the variable set $y$ now decrease along rows.
\newline \newline \noindent
The resulting determinant has several features; it involves the dual partitions $\lambda^\prime, \mu^\prime$ rather than $\lambda, \mu$, and it involves elementary symmetric functions rather than complete symmetric functions (which usually occur in the most common form of the Jacobi-Trudi identity). To fix these things, we apply the involution $\omega$ (an algebra automorphism on $\Lambda$ which interchanges elementary and complete symmetric functions) to the generating function in Equation \ref{genfn}, separately in all four variable sets. Doing this for the $\alpha$ and $\beta$ results in restoring $\lambda$ and $\mu$ (rather than the dual partitions), whilst doing this for $x$ and $y$ results in exchanging elementary symmetric functions for complete symmetric functions. We will undo this operation later. Hence, let us write
\begin{equation} \label{det}
\tilde{S}_{\lambda, \mu} = (\omega_x \otimes \omega_y) S_{\lambda^\prime, \mu^\prime}(x,y) = \det 
\left( \begin{array}{ccccccc}
h_{\mu_m}(y) & h_{\mu_m -1}(y) & \cdots & & & &\\
h_{\mu_{m-1} + 1}(y) & h_{\mu_{m -1}}(y) & \cdots & & & & \\
\cdots & \cdots & \cdots & & & \\
 &  &  &\cdots  & h_{\lambda_1}(x) & h_{\lambda_1+1}(x)& \cdots \\
 &  &  &\cdots  & h_{\lambda_2-1}(x) & h_{\lambda_2}(x)& \cdots \\
 &  &  &\cdots  & \cdots & \cdots & \cdots 
 \end{array} \right)
\end{equation}
This means that the result of applying the involutions $\omega$ in each variable set to the generating function in Equation 1 gives the following generating function.
\[
\sum_{\lambda, \mu} s_\lambda(\alpha) s_\mu(\beta) \tilde{S}_{\lambda, \mu}(x, y)
\]
Here we use the fact that $\omega(s_\rho) = s_{\rho^\prime}$, but then re-index the sum, removing all the dual partitions from the expressions.

\section{Calculating $\tilde{S}_{\lambda, \mu}(x,y)$}
\noindent
We now use the extended Jacobi-Trudi formula to find an expression for $\tilde{S}_{\lambda, \mu}$. The extended Jacobi-Trudi formula gives an expression for skew-Schur functions. Namely, if $\rho, \sigma$ are partitions, then $s_{\rho / \sigma} = \det(h_{\rho_i -i - \sigma_j +j})$. The case where $\sigma$ is the empty partition gives the usual Jacob-Trudi determinant. We use the Laplace expansion for the determinant in Equation \ref{det}, but we group terms by cosets of $S_m \times S_n$ in $S_{m+n}$ (recall that $l(\mu) = m$ and $l(\lambda) = n$, so that the matrix is of size $m+n$ by $m+n$). The purpose of these groupings is that the sum of terms in each individual grouping (corresponding to a fixed coset) is (up to sign) the product of a determinant in complete symmetric functions of $y$, and a determinant in complete symmetric functions of $x$. The extended Jacobi-Trudi formula allows us to explicitly describe these.
\newline \newline \noindent
As an example, the trivial coset (i.e. $S_m \times S_n$) gives rise to 
\[
\det
\left( \begin{array}{cccc}
h_{\mu_m}(y) & h_{\mu_m -1}(y) & \cdots & h_{\mu_m - m+1}(y)\\
h_{\mu_{m-1} + 1}(y) & h_{\mu_{m -1}}(y) & \cdots & h_{\mu_{m-1}-m+2}(y) \\
\cdots & \cdots & \cdots & \cdots \\
h_{\mu_1 + m-1}(y) & h_{\mu_1 + m - 2}(y) & \cdots & h_{\mu_1}(y)
 \end{array} \right)
\det
\left( \begin{array}{cccc}
h_{\lambda_1}(x) & h_{\lambda_1 +1}(x) & \cdots & h_{\lambda_1 + n-1}(x)\\
h_{\lambda_{2} - 1}(x) & h_{\lambda_2}(x) & \cdots & h_{\lambda_2 + n -2}(x) \\
\cdots & \cdots & \cdots & \cdots \\
h_{\lambda_n - n+1}(x) & h_{\lambda_n - n + 2}(x) & \cdots & h_{\lambda_n}(x)
 \end{array} \right)
  \]
The first determinant, upon conjugating by the matrix with ones on the anti-diagonal, gives the Jacobi-Trudi form for $s_\mu(y)$, whilst the second determinant is already in the Jacobi-Trudi form for $s_\lambda(x)$.
\newline \newline \noindent
The general term associated to a coset involves choosing $m$ columns out of the $m+n$ columns (determined by the coset, e.g. if the coset is $g S_m \times S_n$, then the columns are $g(1), g(2), \cdots g(m)$). One gets the minor associated to those columns and the first $m$ rows multiplied by the minor associated to the remaining columns and the bottom $n$ rows, multiplied by the sign of $g$, which we take to be a minimal-length coset representative.
\newline \newline \noindent
To work out what these terms look like, we associate to each coset a sequence of $m$ copies of the symbol $\times$ and $n$ copies of the symbol $\circ$ by putting $\times$ in position $g(1), g(2), \cdots, g(m)$, and putting $\circ$ in the other locations. An example sequence is $\times \times \circ \times \circ \circ \times \circ$. Now we construct a partition $\tau$ in the following way. The number of parts of size $i$ in $\tau$ is equal to the number of $\circ$ symbols in the diagram having exactly $i$ copies of the symbol $x$ to their right. Thus in the example sequence, we get the partition $\tau = (2,1,1,0)$. We may also form the analogous partition for the $\times$ symbols (counting the number of $\circ$ symbols to the left), but this is just $\tau^\prime$ because the $i$-th part is equal to the number of $\circ$ symbols to the left of the $i$-th $\times$ symbol from the right. But this is also equal to the number of $\circ$ symbols with at least $i$ $\times$ symbols to their right.
\newline \newline \noindent
We also comment that swapping an adjacent $\times$ and $\circ$ changes the sign of $g$ (minimal length coset representative), and also changes $|\tau|$ by $1$. Since in the case $g= Id$, $\tau$ is the trivial partition, we see that the sign of $g$ is equal to $(-1)^{|\tau|}$ by an inductive argument.
\newline \newline \noindent
The whole point of this construction is that the extended Jacobi-Trudi formula shows that the contribution from $g S_m \times S_n$ is precisely $s_{\mu / \tau^\prime}(y) s_{\lambda / \tau}(x) (-1)^{|\tau|}$. We must sum this over all $\tau$ that can be constructed in this way. Observe that this condition is equivalent to $l(\tau) \leq r$ and $\tau_1 \leq s$ (i.e. that $\tau$ should fit in an $r$ by $s$ rectangle). If the first condition fails, then  $ s_{\lambda / \tau}(x) =0$, and if the second condition fails, then  $s_{\mu / \tau^\prime}(y)=0$. We may therefore sum over all partitions $\tau$, because the terms not included in the original sum are all zero. We hence obtain
\begin{equation}
\tilde{S}_{\lambda,\mu}(x,y) = \sum_{\tau}(-1)^{|\tau|}s_{\lambda / \tau}(x) s_{\mu / \tau^\prime}(y).
\end{equation}
At this point, it is clear that applying the involution $\omega$ in variable sets $x$ and $y$ sends $\tilde{S}_{\lambda,\mu}(x,y)$ to $\tilde{S}_{\lambda^\prime, \mu^\prime}(x, y)$, and therefore $\tilde{S}_{\lambda, \mu}(x,y) = S_{\lambda, \mu}(x,y)$. Comparing this to the generating function we are trying to calculate shows that it is actually invariant under simultaneous application of the involution $\omega$ in all four variable sets. 

\section{The Generating Function}
\noindent
We use some generalised Cauchy identities to deduce Equation \ref{genfn}, they can be found in Chapter 1, Section 5, Example 26 of \cite{macdonald}.
\[
\sum_{\lambda}s_\lambda(\alpha) s_{\lambda / \tau}(x) = s_\tau(\alpha) \left(\prod \frac{1}{1 - x_i \alpha_j} \right)
\]
\[
\sum_{\mu} s_{\mu}(\beta) s_{\mu / \tau^\prime}(y) = s_{\tau^\prime}(\beta) \left( \prod \frac{1}{1 - y_i \beta_j} \right)
\]
Multiplying these two things together, and including the factor of $(-1)^{|\tau|}$ and using (a slightly modified form of) the dual-Cauchy identity, namely
\[
\sum_{\tau} (-1)^{|\tau|} s_\tau(\alpha) s_{\tau^\prime}(\beta) = \prod (1 - \alpha_i \beta_j)
\]
allows us to obtain the claimed formula for the generating function:
\[
\sum_{\lambda, \mu} S_{\lambda, \mu}(x,y) s_\lambda (\alpha) s_\mu(\beta)
=
\left(\prod \frac{1}{1 - x_i \alpha_j} \right)\left( \prod \frac{1}{1 - y_i \beta_j} \right)\prod (1 - \alpha_i \beta_j).
\]

\bibliographystyle{alpha}
\bibliography{ref2.bib}

\end{document}